\documentclass[10pt,leqno]{amsart}
\usepackage{amsmath,amsfonts,amssymb,amsthm} 
\usepackage{epsfig}
\usepackage{graphicx}
\usepackage{psfrag} 
\newtheorem{theorem}{Theorem}[section]

\newtheorem{lemma}[theorem]{Lemma}

\theoremstyle{definition}

\newtheorem{remark}[theorem]{Remark}

\numberwithin{equation}{section}


\newcommand{\define}[1]{{\em #1\/}}

\newcommand{\cF}{{\mathcal F}}

\newcommand{\cH}{{\mathcal H}}

\newcommand{\field}[1]{\mathbb{#1}}
\newcommand{\N}{\field{N}}          		
\newcommand{\R}{\field{R}}          		
\newcommand{\Hy}{\field{H}}

\DeclareMathOperator{\supp}{spt}

\DeclareMathOperator{\dist}{dist}

\DeclareMathOperator\diver{div}

\DeclareMathOperator{\Vol}{Vol}

\DeclareMathOperator\spt{spt}

\DeclareMathOperator{\bd}{bd}

\DeclareMathOperator{\ir}{int}
\DeclareMathOperator\Int{Int}
\DeclareMathOperator\cone{Cone}


\hyphenation{Lip-schitz}



\begin{document}
\title[Asymptotic $H$-Plateau problem]{Asymptotic Plateau problem for prescribed mean curvature hypersurfaces}
\author[Jean-Baptiste Casteras]{Jean-Baptiste Casteras}
\address{D\'epartement de Math\'ematique, Universit\'e Libre de Bruxelles, CP 214, Boulevard du Triomphe, B-1050 Bruxelles, Belgium}
\email{jeanbaptiste.casteras@gmail.com}
\author[Ilkka Holopainen]{Ilkka Holopainen}
\address{Department of Mathematics and Statistics, P.O. Box 68, 00014 University of
Helsinki, Finland}
\email{ilkka.holopainen@helsinki.fi}
\author[Jaime B. Ripoll]{Jaime B. Ripoll}
\address{UFRGS, Instituto de Matem\'atica, Av. Bento Goncalves 9500, 91540-000 Porto
Alegre-RS, Brasil}
\email{jaime.ripoll@ufrgs.br}
\thanks{J.-B.C. supported by the FNRS project MIS F.4508.14; I.H. supported by the V\"ais\"al\"a Fund and the Magnus Ehrnrooth foundation; 
J.R. supported by the CNPq (Brazil) project 302955/2011-9}
\subjclass[2000]{53A10, 53C42, 49Q05, 49Q20}
\keywords{Hadamard manifolds, asymptotic Plateau problem}

\begin{abstract}
We prove the existence of solutions to the asymptotic Plateau problem for hypersurfaces of prescribed mean curvature in Cartan-Hadamard manifolds $N$. More precisely, given a suitable subset $L$ of the asymptotic boundary of $N$ and a suitable function $H$ on $N$, we are able to construct a set of locally finite perimeter whose boundary has generalized mean curvature $H$ provided that $N$ satisfies the so-called strict convexity condition and that its sectional curvatures are bounded from above by a negative constant. 
We also obtain a multiplicity result in low dimensions.
\end{abstract}

\maketitle
\section{Introduction}
In this note we consider an asymptotic Plateau problem for hypersurfaces of prescribed generalized mean curvature and asymptotic behaviour at infinity on Cartan-Hadamard manifolds that have strictly negative curvature upper bound $-\alpha^2$ and satisfy the so-called strict convexity condition. We call such hypersurfaces as \emph{$H$-mean curvature hypersurfaces}, where $H\colon N\to [-H_0,H_0]$ is a given continuous function, with $0\le H_0<(\dim N-1)\alpha$.
Recall that a Cartan-Hadamard manifold $N$ is a complete, connected, and simply connected Riemannian $n$-manifold, $n \geq 2$, of non-positive sectional curvature. It can be compactified by adding a sphere at infinity $\partial_\infty N$ and equipping the union $N\cup\partial_\infty N$ with the cone topology; see \cite{EO} for the details. We say that $N$ satisfies the \emph{strict
convexity condition (SC condition)} if, given $x\in\partial_{\infty}N$ and a
relatively open subset $W\subset\partial_{\infty}N$ containing $x,$ there
exists a $C^{2}$ open subset $\Omega\subset N$ such that
$x\in\ir \partial_{\infty}\Omega  \subset W$ and $N\setminus\Omega$ is convex. Here $\ir \partial_{\infty}\Omega$ denotes
the interior of $\partial_{\infty}\Omega$ in $\partial_{\infty}N$. 
For later purposes we also denote by $\bd$ the boundary relative to $\partial_\infty N$.
The SC condition was introduced by Ripoll and Telichevesky (\cite{RT}) in the context of an asymptotic Dirichlet problem for the minimal graph equation.

The $H$-mean curvature hypersurfaces are natural generalizations of minimal hypersurfaces. Anderson (\cite{AndInv}) solved the asymptotic Plateau problem in the hyperbolic space 
$\Hy^{n}$ for absolutely area minimizing varieties of arbitrary dimension and codimension. More precisely, he showed that, for any embedded closed submanifold $\Gamma^{p-1}\hookrightarrow \partial_\infty\Hy^{n}$ in the sphere at infinity, there exists a complete, absolutely area minimizing locally integral $p$-current $\Sigma \subset \Hy^{n}$ asymptotic to $\Gamma^{p-1}$ at infinity. By interior regularity results from geometric measure theory (see \cite{federer,morgan}), $\Sigma$ is smoothly embedded in $\Hy^{n}$ provided that $p=n-1\le 6$. Using similar methods based on mass minimizing currents, Bangert and Lang extended Anderson's results to Riemannian manifolds that are diffeomorphic and bi-Lipschitz equivalent to a 
Cartan-Hadamard manifold that has pinched sectional curvature $-b^2\le K\le -1$ (\cite{BL}) or that is Gromov hyperbolic and has bounded geometry (\cite{Lang}). In these results the topology of the minimal submanifold is not controlled. We refer to \cite{AndCMH,oliveirasoret,martinwhite,RTo} for existence results of complete minimal hypersurfaces with prescribed topology.

Our research is inspired by the papers of Alencar and Rosenberg \cite{Al-Ro} and of Tonegawa \cite{Tone} who studied the asymptotic Plateau problem for constant mean curvature hypersurfaces in the hyperbolic space. Tonegawa also investigated the regularity up to the boundary. For existence results of constant mean curvature hypersurfaces in $\Hy^n$ that can be represented as graphs we refer to \cite{guanspruck,nellispruck,RT_BBMS}. We refer to \cite{cosku,coskuTAMS} for a survey and recent developments on the $H$-Plateau problem in the hyperbolic space. 
In this paper we obtain the hypersurfaces $M$ of generalized mean curvature $H$ as reduced boundaries of sets of locally finite perimeter $Q$ in $N$ that locally minimizes a family of functionals
\begin{equation}\label{glob-funct}
\int_{Q\cap C}H(x) dm(x) + \int_{\partial^* Q\cap C}d\cH^{n-1} 
\end{equation}
for all compact sets $C\subset N$.

In \cite{CHR3} we searched geometric conditions on $N$ that would imply the SC condition.
We were able to verify the SC condition on manifolds whose curvature lower bound could go to $-\infty$ and upper bound to $0$ simultaneously at certain rates, or on some manifolds whose sectional curvatures go to $-\infty$ faster than any prescribed rate. For instance, the SC condition holds on $N$ if 
\[
 -\dfrac{e^{2k r(x)}}{r(x)^{2+2\varepsilon}}\leq K_N (x) \leq -k^2,
 \]
 or 
 \[
 -c\,e^{(2-\varepsilon)r(x)}e^{\displaystyle{e^{r(x)/e^3}}} \le K_N(x) \le -\phi e^{2r(x)}
 \]
for all $x\in N$ such that $r(x)=d(x,o)\geq R^\ast $, $R^\ast$ large enough, and where 
$k>0,\ \varepsilon>0,\ \phi>1/4$, and $c>0$ are constants. We refer to 
\cite[Theorem 1.3]{CHR3} for more precise conditions. The SC condition was also applied in \cite{CHR3} to solve  asymptotic Dirichlet and Plateau problems on Cartan-Hadamard manifolds.

Our main results read as follows. 
\begin{theorem}\label{main1}
Let $N^n$ be a Cartan-Hadamard manifold satisfying the SC condition. Suppose also that sectional curvatures of $N$ have a negative upper bound
\[
K_N\le-\alpha^2<0,\quad\alpha>0.
\]
Let $L\subset\partial_\infty N$ be a relatively closed subset such that 
$L=\bd U=\bd U'$ for some disjoint relatively open subsets $U,U'\subset\partial_\infty N$ with $U\cup U'=\partial_\infty N\setminus L$. Suppose that $H\colon N\to[-H_0,H_0]$ is a continuous function where $0\le H_0<(n-1)\alpha$. Then there exists an open set $Q\subset N$ of locally finite perimeter whose boundary $M$ has (generalized) mean curvature $H$ towards $N\setminus Q,\ \partial_\infty Q\subset U\cup L$, and $\partial_\infty M=L$. 
\end{theorem}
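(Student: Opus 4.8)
The plan is to obtain $Q$ as a local minimizer of the prescribed--mean--curvature functional
\begin{equation*}
\mathcal{F}(Q;C)=\int_{Q\cap C}H\,dm+\int_{\partial^*Q\cap C}d\cH^{n-1}
\end{equation*}
by an exhaustion argument, and then to pin down its trace at infinity by barrier comparisons. First I would fix $o\in N$ and an exhaustion by geodesic balls $B_i=B(o,R_i)$, $R_i\nearrow\infty$, and introduce the reference set $E$ consisting of those points of $N$ lying on geodesic rays from $o$ whose endpoints belong to $U$, so that $\partial_\infty E=\overline{U}=U\cup L$. For each $i$ I would minimize $\mathcal{F}(\,\cdot\,;\overline{B_i})$ over the class $\mathcal{A}_i=\{Q:\ Q\,\triangle\,E\Subset B_i\}$ of sets coinciding with $E$ outside a compact subset of $B_i$; only one admissible competitor of finite local energy is needed, for which $E$ (or a suitable smoothing of it near $\partial_\infty N$) serves. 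Existence of a minimizer $Q_i$ would follow from the direct method: the perimeter term is lower semicontinuous and the term $\int_Q H\,dm$ is continuous under $L^1_\loc$ convergence of characteristic functions because $|H|\le H_0$. Coercivity comes from the curvature hypothesis, since integrating $\diverg\nabla r\ge(n-1)\alpha$ yields the linear isoperimetric inequality $P(A)\ge(n-1)\alpha\,m(A)$ for the perimeter $P$, and hence $\mathcal{F}(A;C)\ge\bigl(1-\tfrac{H_0}{(n-1)\alpha}\bigr)P(A;C)$; here the hypothesis $H_0<(n-1)\alpha$ is precisely what keeps this factor positive.

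By these bounds the minimality of $Q_i$ would yield uniform \emph{local} perimeter bounds (comparing $Q_i$ with the sets obtained by filling or emptying a fixed ball $B_j$), so the compactness theorem for sets of finite perimeter provides a subsequence with $\chi_{Q_i}\to\chi_Q$ in $L^1_\loc(N)$ for some open set $Q$ of locally finite perimeter. A standard cut--and--paste argument then shows that $Q$ is a local minimizer of $\mathcal{F}$: any competitor differing from $Q$ inside a compact set $C$ is, for all large $i$, admissible for $Q_i$, and lower semicontinuity transfers the minimality to the limit. Finally, the first variation of $\mathcal{F}$ at the local minimizer $Q$ forces its reduced boundary $M=\partial^*Q$ to have generalized mean curvature $H$ with respect to the normal pointing into $N\setminus Q$; this weak Euler--Lagrange identity needs no regularity of $M$ and gives the interior conclusion of the theorem.

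The crux, and the step I expect to be hardest, is identifying the asymptotic boundary: $\partial_\infty Q\subset U\cup L$ and $\partial_\infty M=L$. Both hypotheses are essential here. Given $x\in U$ and a small relatively open $W\subset U$ with $x\in W$, the SC condition supplies a $C^2$ set $\Omega$ with $x\in\ir\partial_\infty\Omega\subset W$ and $N\setminus\Omega$ convex. Convexity of $N\setminus\Omega$ means the nearest--point projection onto it is $1$--Lipschitz, hence area non--increasing, so replacing $Q$ by $Q\cup\Omega$ cannot increase the perimeter term while changing the volume term by a quantity controlled, through the linear isoperimetric inequality and $H_0<(n-1)\alpha$, by that same perimeter gain; minimality then forces $Q$ to contain $\Omega$ near infinity, so $x\notin\partial_\infty M$. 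Running this over all of $U$, together with the symmetric argument applied to $N\setminus Q$ using convex barriers whose traces lie in $U'$, would give $\partial_\infty M\subset\bd U\cap\bd U'=L$ and $\partial_\infty Q\subset U\cup L$. For the reverse inclusion $L\subset\partial_\infty M$ one uses that large geodesic spheres, having mean curvature $\ge(n-1)\alpha>H_0$, act as barriers preventing $Q$ from emptying out or from swallowing the directions of $U'$ as $i\to\infty$; consequently $Q$ stays asymptotic to $U$ and $N\setminus Q$ to $U'$, so every $x\in L=\bd U$ is accumulated by $M$ from both sides and lies in $\partial_\infty M$. The delicate technical points are making these comparisons rigorous at infinity in the finite--perimeter category and controlling the interaction of the two barrier families in a neighborhood of $L$, where most of the work will be concentrated.
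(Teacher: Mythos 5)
Your interior construction (exhaustion, direct method for the Massari functional, uniform local perimeter bounds, cut-and-paste transfer of minimality, first variation giving generalized mean curvature $H$) follows the same skeleton as the paper, and the use of SC-barriers for the boundary behaviour is also the paper's strategy. But your key barrier step contains a genuine gap. You claim that, for $x\in U$ and an SC-barrier $\Omega$ with $N\setminus\Omega$ convex, minimality forces $Q\supset\Omega$ near infinity, because the $1$-Lipschitz nearest-point projection gives $P(Q\cup\Omega)\le P(Q)$ while the volume change is ``controlled, through the linear isoperimetric inequality, by that same perimeter gain.'' This bookkeeping does not close. The bulk term can increase by as much as $H_0\,m(\Omega\setminus Q)$, and the linear isoperimetric inequality bounds $m(\Omega\setminus Q)$ by $\frac{1}{(n-1)\alpha}P(\Omega\setminus Q)$ --- but $P(\Omega\setminus Q)$ includes the piece of $\partial\Omega$ bordering $\Omega\setminus Q$, which is \emph{newly created} perimeter, not saved perimeter. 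If $\partial Q$ hugs $\partial\Omega$ from outside, the perimeter gain $P(Q)-P(Q\cup\Omega)$ is negligible while $m(\Omega\setminus Q)$ is enormous, so the comparison fails. Indeed your conclusion is stronger than what is true: for $H\ne 0$ the minimizer can and in general will penetrate $\Omega$ to a definite depth, and the correct statement (Lemma~\ref{traplemma}) is only that $M$ stays in the $r_0$-neighborhood of the convex set $N\setminus\Omega$, where $r_0$ satisfies $(n-1)\alpha\tanh(\alpha r_0)>H_0$. The operative mechanism is not bare convexity of $N\setminus\Omega$ but the Laplacian comparison for the distance function $d=\dist(\cdot,N\setminus\Omega)$, namely $\Delta d\ge(n-1)\alpha\tanh(\alpha d)>H_0$ for $d\ge r_0$ (inequality \eqref{globHcond}), exploited through a first-variation/flow deformation along $-\rho(d)\nabla d$ as in Lemma~\ref{lemma5}; this is exactly where the hypotheses $K_N\le-\alpha^2$ and $H_0<(n-1)\alpha$ enter, and your proposal never retreats to the $r_0$-level set, so it cannot be repaired by sharpening constants alone.

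A second, related defect: $Q\cup\Omega$ differs from $Q$ on a noncompact set, so it is not an admissible competitor for a \emph{local} minimizer; any honest localization (intersecting the modification with large balls) produces error terms on the spheres $\partial B(o,R)$, and since the theorem assumes no lower curvature bound there is no uniform control of volume or sphere-area growth with which to kill these errors as $R\to\infty$. The paper avoids this entirely by proving the trapping at the level of the approximating minimizers $M_i$, which have prescribed compact boundary data $\Gamma_i$ on spheres, and only then passing to the limit; your exterior-datum scheme would likewise have to run the barrier comparison on each $Q_i$ before taking $i\to\infty$. Once trapping is established in this corrected form, your separation argument for $L\subset\partial_\infty M$ (each $x\in L$ is accumulated by rays in $Q$ from $U$ and rays in $N\setminus Q$ from $U'$) is a plausible alternative to the paper's linking argument with the curves $\Gamma_i$, though the invocation of large geodesic spheres as the non-collapse mechanism is not the right barrier --- in your setup the non-degeneracy would come from the exterior datum together with the two trapped barrier families, not from sphere comparisons.
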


Following \cite{Al-Ro}, we also obtain a multiplicity result: 
\begin{theorem}\label{main2}
Let $N^n$  and $L\subset\partial_\infty N$ be as above. If $H\in ]0,H_0]$ is a constant, there exist two disjoint open sets $Q_j\subset N, j=1,2$, of locally finite perimeter whose boundaries $M_j$ have constant (generalized) mean curvature $H$ and $\partial_\infty M_j=L$. Moreover, if $n\le 7$, then  $M_1$ and $M_2$ are disjoint.
\end{theorem}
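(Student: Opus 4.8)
The plan is to produce the two solutions by applying Theorem \ref{main1} twice, to the two orderings of the decomposition $\partial_\infty N\setminus L=U\cup U'$, and then to separate them by a comparison argument that exploits the sign asymmetry of the constant $H>0$. First I would apply Theorem \ref{main1} to the data $(U,U',L)$ with the constant function $H$ to obtain an open set $Q_1$ whose boundary $M_1$ has mean curvature $H$ towards $N\setminus Q_1$, with $\partial_\infty Q_1\subset U\cup L$ and $\partial_\infty M_1=L$. Since $L=\bd U=\bd U'$ and $U\cup U'=\partial_\infty N\setminus L$, the triple $(U',U,L)$ is again admissible, so applying Theorem \ref{main1} to it (still with the constant $H$) yields a second set $Q_2$, with $M_2=\partial Q_2$ of mean curvature $H$ towards $N\setminus Q_2$, $\partial_\infty Q_2\subset U'\cup L$ and $\partial_\infty M_2=L$. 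As $U\cap U'=\emptyset$, the two solutions already have disjoint traces at infinity, and they are genuinely distinct.

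The heart of the matter is to show $Q_1\cap Q_2=\emptyset$. I would set $P=N\setminus\overline{Q_2}$, which is asymptotic to $U$; since $\int_{P\cap C}H\,dm=\int_{C}H\,dm-\int_{Q_2\cap C}H\,dm$ while the perimeter is unchanged under complementation, $P$ is a local minimizer of the functional \eqref{glob-funct} with $H$ replaced by $-H$. Thus $Q_1$ and $P$ are local minimizers, both on the $U$-side, of this functional with mean curvature data $+H$ and $-H$ respectively. Comparing them by the standard submodularity pairing---testing the minimality of $Q_1$ against $Q_1\cap P$ and that of $P$ against $Q_1\cup P$, adding the two inequalities, and using that perimeter is submodular while volume is additive---one is left with $2H\,|Q_1\setminus P|\le 0$, whence $|Q_1\setminus P|=0$ and $Q_1\subset P$ up to a null set. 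Passing to open representatives, $Q_1\cap Q_2=Q_1\cap(N\setminus\overline P)=\emptyset$, which establishes the first assertion in every dimension.

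For $n\le 7$ the interior regularity theory for minimizers of \eqref{glob-funct} (De Giorgi--Federer; the singular set has Hausdorff dimension at most $n-8$, hence is empty) makes $M_1$ and $M_2$ smooth embedded hypersurfaces. The nesting $Q_1\subset P$ forces any contact point to be one of interior tangency, with $M_1$ lying on the $Q_1$-side of $M_2$; there the mean curvature vector of $M_1$ points out of $Q_1$ (magnitude $H$) while that of $M_2=\partial P$ points into $P$ (magnitude $H$), so that both point into the region $P\setminus\overline{Q_1}$ separating them. Writing the two surfaces as graphs over their common tangent plane and using the ellipticity of the prescribed mean curvature operator, this configuration yields $-H\ge H$, contradicting $H>0$; hence $M_1\cap M_2=\emptyset$.

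The main obstacle is the comparison step in the asymptotic setting: a priori $Q_1\setminus P$ need not be compactly contained, so $Q_1\cap P$ and $Q_1\cup P$ are not obviously admissible competitors for the local minimality. I would handle this by carrying out the submodularity comparison at the level of the approximating problems on the exhausting balls $B_R$ used in the proof of Theorem \ref{main1}---where the relevant symmetric differences lie in $B_R$, and the boundary data on $\partial B_R$ for the two $U$-side problems (with data $+H$ and $-H$) are compatible---obtaining $Q_1^R\subset P^R$ and then passing to the limit $R\to\infty$.
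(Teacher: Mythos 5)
Your proposal is correct, and it splits cleanly into parts that coincide with the paper and one part that is genuinely different. The existence of $Q_1$ and $Q_2$ is obtained exactly as in the paper: the paper re-runs the proof of Theorem~\ref{main1} with the spherical data $\Sigma_i$ replaced by $\Sigma_i'=\partial B(o,R_i)\setminus(\Sigma_i\cup\Gamma_i)$, which is precisely your application of Theorem~\ref{main1} to the triple $(U',U,L)$; and your $n\le 7$ argument is also the paper's (stated there as a separate theorem: at a common regular point the two hypersurfaces lie on the same side of each other and are tangent, with mean curvatures $-H$ and $H$ towards $N\setminus Q_2$, contradicting $H>0$; the paper even gets $(M_1\setminus S_1)\cap(M_2\setminus S_2)=\emptyset$ in every dimension). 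Where you diverge is the proof that $Q_1\cap Q_2=\emptyset$. The paper solves the auxiliary problem with $H\equiv 0$ to produce a mass-minimizing boundary $S=\partial[W]$ with $\partial_\infty \spt S=L$, and shows $Q_1\subset W$, $Q_2\subset N\setminus\overline{W}$ by a cut-and-paste replacement (swapping $M_1\cap(N\setminus\overline{W})$ for $\spt S\cap Q_1$ would strictly decrease $\cF$), so the minimal hypersurface acts as a separating barrier. You instead complement $Q_2$, note that $P=N\setminus\overline{Q_2}$ minimizes the functional with $-H$ and the same spherical trace as the $+H$-minimizer $Q_1$ (here the exact complementarity of $\Sigma_i$ and $\Sigma_i'$, with $\Gamma_i$ being $\cH^{n-1}$-null, is exactly what the paper's construction provides), and run the lattice comparison $2H\,|Q_1^{R}\setminus P^{R}|\le \big(P(Q_1^R\cap P^R)+P(Q_1^R\cup P^R)\big)-\big(P(Q_1^R)+P(P^R)\big)\le 0$ at the ball level before passing to a common subsequence; the computation is right, and your localization to the balls correctly handles the admissibility obstruction you identify, which the paper's own replacement argument actually leaves implicit in the noncompact setting. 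What each approach buys: the paper's yields the extra geometric fact that the two constant mean curvature boundaries are separated by a minimal hypersurface asymptotic to $L$; yours avoids the auxiliary $H=0$ solution, is quantitative, and crucially exploits that $H$ is a positive constant (the paper's barrier argument likewise uses the sign of $H$). One technicality you should spell out: the paper's minimizers are obtained in a class of open sets whose topological boundaries are rectifiable manifolds, and $Q_1^R\cap P^R$, $Q_1^R\cup P^R$ need not lie in that class, so you need minimality of $Q_1^R$ and $P^R$ in the full class of finite-perimeter sets with prescribed trace on $\partial B(o,R_i)$; this follows by the smooth approximation (\cite[Theorem 3.42]{AmbFusPal}) that the paper itself invokes in the proof of Lemma~\ref{lemma5}.
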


It seems that some kind of convexity condition is necessary to solve the asymptotic Plateau problem. Indeed, Kloeckner and Mazzeo \cite[Corollary 5.2]{KlMa} proved, in the case $N^3=\mathbb{H}^2\times \R$, which clearly does not satisfy the SC condition, that the asymptotic Plateau problem for the geodesic compactification (the one described above), is solvable only for very specific families of curves $L$. On a related topic, Coskunuzer \cite{coskupreprint} also obtained non-existence results in the case $N^3=\mathbb{H}^2\times \R$ but with the product compactification i.e. $\partial_\infty N= (\partial_\infty \mathbb{H}^2 \times \R) \cup (\partial_\infty \mathbb{H}^2 \times \partial_\infty \R) \times (\mathbb{H}^2 \times \partial_\infty \R )$. A similar result has been obtained in \cite{KlMeRa} when $N$ is the homogenuous space $\mathbb{E} (-1 , \tau)$. Notice that in all the previous examples, the sectional curvatures are null in some directions. The asymptotic Plateau problem for minimal hypersurfaces might be solvable on manifolds satisfying, instead of the SC condition, a \emph{strict mean convexity condition}, where the convexity of sets $N\setminus\Omega$ is replaced by mean convexity. We also believe that it should be possible to allow the sectional curvatures of $N$ to go to $0$ not too fast (as in our results concerning the asymptotic Plateau problem for minimal submanifolds \cite{CHR3}). But currently, we do not know how to prove Lemma~\ref{traplemma} in such settings; see in particular \eqref{globHcond}.

The plan of this note is the following: in Section~\ref{gmc} we introduce the variational problem we are going to consider and prove the first variation formula for smooth hypersurfaces. Using the first variation formula we then obtain a maximum principle. Then, in Section~\ref{app} we apply the SC condition and the curvature upper bound to prove a ``trapping" lemma (Lemma~\ref{traplemma}). Then we consider the asymptotic $H$-Plateau problem and prove our main results.

\subsection*{Acknowledgement}
We wish to thank the referee for suggestions, which led to improvements in the presentation.

\section{Prescribed (generalized) mean curvature}\label{gmc}
Let $N^n$ be an $n$-dimensional Cartan-Hadamard manifold. 
Suppose that $B=B(o,R)$ is a geodesic (open) ball of radius $R$ centered at $o\in N$ and that $\Sigma\subset \partial B$ is a 
relatively open set, with $\cH^{n-2}$-rectifiable boundary $\Gamma=\partial\Sigma$ (relative to $\partial B$). 
Consider open sets $Q\subset N$ of finite perimeter and with (topological) boundary 
\[
\partial Q=M\cup \Gamma\cup \Sigma,
\]
where $M\subset N$ is an $\cH^{n-1}$-rectifiable topological, not necessarily connected, $(n-1)$-manifold.  

Given a continuous function $H\colon N\to\R$ we consider the following (Massari) functional $\cF$,
\begin{equation}\label{massari}
\cF(M)= \int_{Q}H(x)dm(x) + \int_{M}d\cH^{n-1},
\end{equation} 
where $M$ and $Q$ are as above. We call such $M$ admissible for the functional $\cF$ with boundary data $\Gamma$.

We call $H$ the \define{prescribed mean curvature function}. The terminology is justified by the fact that a stationary $M$ of the functional \eqref{massari} has mean curvature $H(x)$ pointing outwards $Q$ at every point $x\in M$ where $M$ is smooth; see e.g. \cite{massari} and \cite[17.3]{maggi}. We refer to \cite[Theorem 17.5 and Prop. 17.8]{maggi} for first variation formulae of perimeter and potential energy.
For our purposes, the first variation formula \eqref{eq1var} for a \define{smooth} hypersurface $M$ is sufficient. For the convenience of reader's we sketch its proof. To this end, suppose that $M$ is smooth and admissible for \eqref{massari} with boundary data $\Gamma$.
Let $X$ be a smooth  
vector field such that $X=0$ on $\Gamma$ and 
let $\Phi$ be the flow of $X$.  
For sufficiently small $|t|$ we denote
$M_t=\{\Phi(x,t)\colon x\in M\}$ and by $Q_t$ the set bounded by 
$M_t\cup \Gamma \cup \Sigma$. To obtain the first variation formula for smooth $M$ we compute
\begin{align*}
\delta\cF(M)[X]&:=\frac{d}{dt}\cF(M_t)_{|t=0}\\
&=
\frac{d}{dt}\left(\int_{Q_t}H(x)dm(x)+\int_{M_t}d\cH^{n-1}\right)_{|t=0}\\
&=\frac{d}{dt}\left(\int_{Q_t}H(x)dm(x)\right)_{|t=0}
+\int_M \diver_M X d\cH^{n-1}\\
&=\frac{d}{dt}\left(\int_{Q_t}H(x)dm(x)\right)_{|t=0}
-\int_M \langle X,\vec{H}\rangle d\cH^{n-1},
\end{align*}
where $\vec{H}$ is the mean curvature vector field of $M$.
To compute the first term on the right-hand side 
we apply the co-area formula. For that purpose we define in 
$\supp X$ and for sufficiently small $|t|$  
a smooth function $f$ by setting 
$f(x)=t$ for $x\in M_t$. Thus $f=0$ and $\nabla f=X^\perp/|X^\perp|^{2}$ on 
$M=M_0$. 
Hence
\begin{align*}
&\lim_{t\to 0}\frac{1}{t}\left(
\int_{Q_t}H(x) dm(x)-\int_{Q}H(x)dm(x)\right)\\
=&\lim_{t\to 0}\frac{1}{t}\bigg(
\int_{Q_t\setminus Q}|\nabla f(x)||\nabla f(x)|^{-1}H(x) dm(x)\\
&\quad-\int_{Q\setminus Q_t}|\nabla f(x)||\nabla f(x)|^{-1}H(x)dm(x)\bigg)\\
=&\int_{M}\langle X_x,\nu_x\rangle H(x) d\cH^{n-1}(x),
\end{align*}
where $\nu_x$ is the unit normal to $M$ at $x$ pointing outwards $Q$. 
To verify the last step above, we may assume $t\to 0^+$. Then the set 
$Q_t\setminus Q$, whenever nonempty for small $t>0$, corresponds those $x\in M$ where $X_x$ points outwards $Q$, that is 
$\langle X_x,\nu_x\rangle >0$. The co-area formula now implies
\begin{align*}
&\lim_{t\to 0+}\frac{1}{t}\int_{Q_t\setminus Q}|\nabla f(x)||\nabla f(x)|^{-1}H(x) dm(x)\\
=&\lim_{t\to 0+}
\frac{1}{t}\int_0^t\left(\int_{M_s\cap(Q_t\setminus Q)}|\nabla f(x)|^{-1}H(x) d\cH^{n-1}(x)\right)ds\\
=&\int_{M\cap\{\langle X,\nu\rangle >0\}}\langle X_x,\nu_x\rangle H(x)d\cH^{n-1}(x).
\end{align*}
Similarly,
\begin{align*}
\lim_{t\to 0+}\frac{1}{t}\int_{Q\setminus Q_t}&|\nabla f(x)||\nabla f(x)|^{-1}H(x) dm(x)\\
&=-\int_{M\cap\{\langle X,\nu\rangle <0\}}\langle X_x,\nu_x\rangle H(x)d\cH^{n-1}(x).
\end{align*}
Thus we obtain the first variation formula 
\begin{equation}\label{eq1var}
\delta\cF(M)[X]=
\int_M \big\langle X_x,H(x)\nu_x -\vec{H}_x\big\rangle d\cH^{n-1}(x).
\end{equation}

Assuming that a smooth hypersurface $M$ is stationary, i.e.
\[
\delta\cF(M)[X]=0
\]
for all $X$ as above, we conclude that 
 $M$ has prescribed  mean curvature $H(x)$ pointing outwards $Q$.
 
 Fix $o\in N$ and denote by $H(x,r)$ the scalar (inwards pointing) mean curvature of the geodesic sphere 
$S(o,r)=\partial B(o,r)$ at $x\in S(o,r)$. Hence $H(x,r)=\Delta r(x)$, where $r(x)=d(x,o)$, the Riemannian distance between $o$ and $x$.
Suppose that the prescribed mean curvature function 
$H$ satisfies 
\begin{equation}\label{Hcond}
|H(x)|< H(x,r)=\Delta r(x)
\end{equation}
for all $x\in S(o,r)$ and $r>0$.

We have the following maximum principle which is the counterpart of \cite[Lemma 5]{gulliver}, where the proof is given in dimension three in a slightly different setting.
\begin{lemma}\label{lemma5}
Let $M\subset N$ be admissible for $\cF$ with boundary data $\Gamma\subset\partial B,\ B=B(o,R),$ such that $M\setminus\overline{B}\ne\emptyset$. Then there exists $M'\subset\overline{B}$ admissible for $\cF$ with boundary data $\Gamma$ such that $\cF(M')<\cF(M)$. In particular, if $M_0$ is a minimizer of the functional \eqref{massari} with boundary data $\Gamma$, then 
$M_0\subset\overline{B}$.
\end{lemma}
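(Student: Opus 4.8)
The plan is to use the outward radial field $\nabla r$ as a comparison device and simply cut off the part of $Q$ lying outside the ball. Concretely, I would set $A=Q\setminus\overline B$ and take as competitor the set $Q'=Q\cap B$; its free boundary is $M'=(M\cap\overline B)\cup S$, where $S\subset\partial B$ is the ``shadow'' $\partial^*A\cap\partial B$ obtained by capping $A$ off along the sphere, while the data $\Gamma$ and $\Sigma$ are left untouched. By hypothesis $M\setminus\overline B\ne\es$, so there is a point $p\in\partial Q$ with $r(p)>R$; since $Q$ is open and $M$ is an $(n-1)$-manifold, $Q$ has density $1/2$ at $p$ and the set $A$ lies in the open region $\{r>R\}$, which forces $m(A)>0$. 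This positivity is what eventually makes the comparison strict.

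The heart of the argument is the generalized Gauss--Green theorem applied to the finite-perimeter set $A$ with the field $\nabla r$, which is smooth on $A\subset N\setminus\{o\}$ (no cut locus on a Cartan--Hadamard manifold) and satisfies $\diverg\nabla r=\Delta r$. The reduced boundary $\partial^*A$ splits, up to an $\cH^{n-1}$-null set, into the free part $M_{\mathrm{out}}:=M\setminus\overline B$, where $\langle\nabla r,\nu_A\rangle\le|\nabla r|=1$, and the cap $S$, where the outer normal of the exterior region $A$ is $\nu_A=-\nabla r$, so $\langle\nabla r,\nu_A\rangle=-1$. This gives
\begin{equation*}
\int_A\Delta r\,dm=\int_{\partial^*A}\langle\nabla r,\nu_A\rangle\,d\cH^{n-1}\le\cH^{n-1}(M_{\mathrm{out}})-\cH^{n-1}(S),
\end{equation*}
hence $\cH^{n-1}(S)-\cH^{n-1}(M_{\mathrm{out}})\le-\int_A\Delta r\,dm$.

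Combining this with the potential term, which changes by $\int_{Q'}H\,dm-\int_Q H\,dm=-\int_A H\,dm$, yields
\begin{equation*}
\cF(M')-\cF(M)=-\int_A H\,dm+\cH^{n-1}(S)-\cH^{n-1}(M_{\mathrm{out}})\le-\int_A\bigl(H+\Delta r\bigr)\,dm.
\end{equation*}
On $A$ one has $r>R$, so condition \eqref{Hcond} gives $\Delta r(x)=H(x,r)>|H(x)|\ge-H(x)$, i.e. $H+\Delta r>0$ pointwise; since $m(A)>0$ the right-hand side is strictly negative, so $\cF(M')<\cF(M)$ with $M'\subset\overline B$. The final assertion is then immediate: if a minimizer $M_0$ satisfied $M_0\setminus\overline B\ne\es$, the construction would produce a strictly cheaper admissible competitor, contradicting minimality; hence $M_0\subset\overline B$.

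The main obstacle, which I would treat carefully rather than the essentially one-line computation above, is the measure-theoretic bookkeeping: justifying the Gauss--Green formula on the merely finite-perimeter set $A$, identifying $\partial^*A$ with $M_{\mathrm{out}}\cup S$ up to $\cH^{n-1}$-null sets (so that $\nu_A=-\nabla r$ genuinely holds $\cH^{n-1}$-a.e. on the cap), and verifying that $Q'=Q\cap B$ is again admissible for $\cF$ with the same boundary data $\Gamma$. In particular one must check that the junction of $M\cap\overline B$ with the cap $S$ along $M\cap\partial B$ preserves $\cH^{n-1}$-rectifiability of $M'$ and does not interfere with the fixed portion $\Sigma$ of $\partial B$.
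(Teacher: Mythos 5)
Your core mechanism is sound, but it is a genuinely different route from the paper's. You make a single cut-and-cap comparison: replace $Q$ by $Q\cap B$ and control the area of the cap on $\partial B$ by Gauss--Green on $A=Q\setminus\overline{B}$ with the calibration-type field $\nabla r$, so that the hypothesis \eqref{Hcond} enters exactly once through $\int_A(H+\Delta r)\,dm>0$. The paper never caps directly: it first approximates $Q$ by smooth sets via \cite[Theorem 3.42]{AmbFusPal}, then deforms each smooth $M_i$ by radial flows $X_1=-\rho_1(r)\nabla r$, estimating the decrease of $\cF$ through the first variation formula \eqref{eq1var} and the co-area formula, and iterates (projections onto spheres of decreasing radii $r_j$, smoothing at each step, Hausdorff limits) until the surface is trapped in $\overline{B}$. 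Your argument is much shorter and yields the decay estimate with constant $1$ in place of the paper's $1/4$ in \eqref{decay2} (which the paper's Remark notes can anyway be improved to $1$). What the paper's longer route buys is precisely the obstacle you flag at the end: the admissible class requires $M'$ to be an $\cH^{n-1}$-rectifiable \emph{topological manifold} (rectifiability alone is not enough), and the functional \eqref{massari} charges the topological boundary, not the reduced boundary. Your competitor $(M\cap\overline{B})\cup S$ with $S=\partial^*A\cap\partial B$ can fail to be a manifold at the junction $M\cap\partial B$, and the topological cap $\overline{Q\cap B}\cap\partial B$ may carry strictly more $\cH^{n-1}$-measure than the measure-theoretic cap $Q^{(1)}\cap\partial B$ on which $\nu_A=-\nabla r$ actually holds; both defects are curable by running your comparison after the same smooth approximation the paper uses and smoothing the corner at an arbitrarily small cost, which the strict inequality absorbs, so this is deferred technical work rather than a fatal gap. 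Two small corrections: the claim that $Q$ has density $1/2$ at $p$ is neither generally true for a topological boundary point nor needed---$m(A)>0$ follows simply because $A$ is a nonempty open set (every neighborhood of $p\in M\setminus\overline{B}$ contained in $\{r>R\}$ meets the open set $Q$), which is exactly how the paper argues; and for possibly unbounded $Q$ you should first reduce to $M\subset B(o,R+s)$, as the paper does, so that Gauss--Green applies on $A$ and $\int_A\Delta r\,dm$ is finite (if it is infinite, $\cH^{n-1}(M\setminus\overline{B})=\infty$ and the comparison is trivial).
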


\begin{proof}
Let $Q$ be the open set corresponding the set $M$ and let 
$U=Q\setminus\overline{B}$. Note that $U$ is a nonempty open set, and therefore it has positive volume. 
By \cite[Theorem 3.42]{AmbFusPal} there are open sets $Q_i$ with smooth boundaries such that $Q_i$ converges to $Q$ in measure and $\cH^{n-1}(\partial Q_i)=P(Q_i)\to P(Q)$ as 
$i\to \infty$. Here $P(Q)$ stands for the perimeter of $Q$. Hence we may suppose that there exists a sequence of smooth $(n-1)$-manifolds $M_i$ admissible for $\cF$ with boundary data $\Gamma$ such that 
$M_i\setminus\overline{B}\neq\emptyset,\ M_i\subset B(o,R+s)$ for some $s>0$, 
and $\cF(M_i)\to\cF(M)$. 
Therefore it is enough to show that for each $M_i$ there exists 
$M'_i\subset\overline{B}$ admissible for $\cF$ with boundary data 
$\Gamma$ such that $\cF(M'_i)\le \cF(M_i)-c$ for some positive constant $c$ independent of $i$.
More precisely, we prove that 
\begin{equation}\label{decay}
\cF(M'_i)\le\cF(M_i)-\frac{1}{2}\int_{U_i}\big((1-\varepsilon)\Delta r(x)-|H(x)|\big)dm(x),
\end{equation}
where $U_i=Q_i\setminus\overline{B}$ and $\varepsilon>0$ is such that
$|H(x)|<(1-\varepsilon)\Delta r(x)$ for all $x\in \overline{B}(o,R+s)$. 
Since $Q_i$ converges to $Q$ in measure, 
we obtain an admissible $M'\subset\overline{B}$ as the Hausdorff limit of $M'_i$'s, with
 \begin{equation}\label{decay2}
\cF(M')\le\cF(M)-\frac{1}{4}\int_{U}\big((1-\varepsilon)\Delta r(x)-|H(x)|\big)dm(x)
<\cF(M).
\end{equation}
The integral in \eqref{decay2} is positive since $U$ has positive volume and
$(1-\varepsilon)\Delta r -|H|$ is positive in $U$.

To prove \eqref{decay} we fix $M_i$ and let $x_1$ be a point of $M_i$ at the maximal distance from $o$. 
Then the mean curvature vector $\vec{H}_{x_1}$ of $M_i$ at $x_1$ is parallel to 
$-\nabla r(x_1)$ and  
$|\vec{H}_{x_1}|\ge \Delta r(x_1)$. Let $r_1$ be the infimum of all $r\ge R$ such that 
\begin{equation}\label{Hcond2}
\langle\vec{H}_x,\nabla r(x)\rangle < 0\quad\text{and}\quad 
|\vec{H}_x|\ge (1-\varepsilon)\Delta r(x)> |H(x)|
\end{equation}
 for all $x\in M_i$ with $r(x)\ge r$. We notice that $r_1<r(x_1)$.  
Next we define a complete smooth vector field $X_1$ on $N$ by setting
$X_1(x)=-\rho_1\big(r(x)\big)\nabla r(x)$, where  
$\rho_1\colon [0,\infty)\to [0,\infty)$ is a smooth 
function such that $\rho_1(s)>0$ if and only if $s>r_1$.
Let $\Phi_1\colon\R\times N\to N$ be the flow of $X_1$ and denote
\[
M_i^t=\{\Phi_1(t,x)\colon x\in M_i\}
\]
for $t>0$. Furthermore, let 
\begin{align*}
U_{i,1}&=\{\Phi_1(t,x)\colon t>0,x\in M_i\setminus\overline{B}(o,r_1)\}\\
&=\left\lbrace \exp_o tv\colon r_1/|v|<t<1,\ v\in\exp_o^{-1}\big(M_i\setminus\overline{B}(o,r_1)\big)\right\rbrace
\end{align*}
be the open set whose boundary is contained in 
$\big(M_i\setminus\overline{B}(o,r_1)\big)\cup\partial B(o,r_1)$. Notice that
$U_{i,1}\subset U_i$ by \eqref{Hcond2}, in fact, $U_{i,1}=U_i\setminus\overline{B}(o,r_1)$. Finally, let 
$\widetilde{M}_{i,1}$ be the Hausdorff limit of $M_i^t$ as $t\to\infty$. Thus 
\[
\widetilde{M}_{i,1}= 
\big(M_i\cap\overline{B}(o,r_1)\big)\cup 
\left\lbrace \exp_o (r_1 v/|v|)\colon v\in\exp_o^{-1}\big(M_i\setminus\overline{B}(o,r_1)\big)\right\rbrace,
\]
the union of $M_i\cap\overline{B}(o,r_1)$ 
 and the radial projection of $M_i\setminus\overline{B}(o,r_1)$ into $\partial B(o,r_1)$. Notice that 
 $M_i\cap B(o,r_1)=\widetilde{M}_{i,1}$.
Using \eqref{Hcond2} and the co-area formula we obtain
\begin{align}\label{co-area}
\cF(\widetilde{M}_{i,1})&=\cF(M_i)+\int_{0}^{\infty}\delta\cF(M_i^t)[X_1]dt\nonumber \\
&=\cF(M_i)-\int_{0}^{\infty}\left(\int_{M_{i,1}^t}\langle X_1(x),\vec{H}_x -H(x)\nu_x\rangle d\cH^{n-1}(x)\right)dt \\
&\le \cF(M_i)-\int_{U_{i,1}}\big((1-\varepsilon)\Delta r(x)-|H(x)|\big)dm(x).\nonumber
\end{align}
To verify the inequality in \eqref{co-area} we define a smooth function $f$ on $U_{i,1}$ by setting $f=t$ on $M_{i,1}^t\cap U_{i,1}$ and apply the co-area formula with the function $f$. Now $\nabla f=X_1^\perp/|X_1^\perp|^2$ on $M_{i,1}^t\cap U_{i,1}$, where $X_1^\perp$ is the orthogonal component of $X_1$ with respect to $M_{i,1}^t$. Thus
\[
|\nabla f(x)|^{-1}=|X_1^\perp (x)|=\langle X_1(x),\vec{H}_x/|\vec{H}_x|\rangle
\]
for $x\in M_{i,1}^t$, where $\vec{H}$ is the mean curvature vector field of 
$M_{i,1}^t$. 
Since $|\vec{H}_x|>|H(x)|$ and $\vec{H}_x$ is parallel to $-\nu_x$, the vector $\vec{H}_x$ is parallel to $\vec{H}_x-H(x)\nu_x$, and therefore
\[
\langle X_1(x),\vec{H}_x -H(x)\nu_x\rangle=-\rho_1(x)\langle \nabla r(x),
\vec{H}_x -H(x)\nu_x\rangle>0
\]
in $U_{i,1}$. More precisely,
\begin{align*}
\langle X_1(x),\vec{H}_x -H(x)\nu_x\rangle
&\ge (|\vec{H}_x|-|H(x)|)\langle X_1,\vec{H}_x/|\vec{H}_x|\rangle\\
&\ge \big((1-\varepsilon)\Delta r(x)-|H(x)|\big)|\nabla f(x)|^{-1}
\end{align*}
in $U_{i,1}$. We obtain
\begin{align*}
\int_{U_{i,1}}&\big((1-\varepsilon)\Delta r(x)-|H(x)|\big)dm(x)\\
&=
\int_{U_{i,1}}\big((1-\varepsilon)\Delta r(x)-|H(x)|\big)|\nabla f(x)|^{-1}|\nabla f(x)| dm(x)x\\
&=\int_0^\infty\left(\int_{M_{i,1}^t}\big((1-\varepsilon)\Delta r(x)-|H(x)|\big)|\nabla f(x)|^{-1}d\cH^{n-1}(x)\right)dt\\
&\le
\int_{0}^{\infty}\left(\int_{M_{i,1}^t}\langle X_1(x),\vec{H}_x -H(x)\nu_x\rangle d\cH^{n-1}(x)\right)dt,
\end{align*}
and the estimate \eqref{co-area} follows.

If $r_1=R$, we obtain \eqref{decay} since $\widetilde{M}_{i,1}\subset\overline{B}$ and $U_{i,1}=U_i$ in that case. 
If $r_1>R$ we continue by smoothing out $\widetilde{M}_{i,1}$ in a neighborhood of $M_i\cap\partial B(o,r_1)$  to obtain a smooth 
$M_{i,1}\subset U_i\cap\overline{B}(o,r_1)$ admissible for $\cF$ with boundary data $\Gamma$. We denote 
by $Q_{i,1}$ the open set bounded by $M_{i,1}\cap\Gamma\cap\Sigma$. We also write $\widetilde{Q}_{i,1}=Q_i\cap B(o,r_1)$ and
$A_{i,1}=\widetilde{Q}_{i,1}\setminus Q_{i,1}$.
The smoothing can be done such that the volume of $A_{i,1}$ and 
$\cH^{n-1}(M_{i,1}\setminus\widetilde{M}_{i,1})$ are as small as we wish, in particular, so that
\begin{align*}
\cF(M_{i,1})-\cF(\widetilde{M}_{i,1})&\le -\int_{A_{i,1}}H(x)dm(x) +\cH^{n-1}\big(M_{i,1}\setminus \widetilde{M}_{i,1}\big)\\
&<\frac{1}{4}\int_{U_i}\big((1-\varepsilon)\Delta r(x)-|H(x)|\big) dm(x).
\end{align*}
We repeat the argument above by choosing a point $x_2\subset M_{i,1}$ at the maximal distance from $o$ and letting $r_2$ be the infimum of $r\ge R$ such that \eqref{Hcond2} holds for all $x\in M_{i,1}$, with $r(x)\ge r$. Then $r_2<r_1$ by the smoothing process. We obtain an admissible $\widetilde{M}_{i,2}\subset\overline{B}(o,r_2)$ with 
\[
\cF(\widetilde{M}_{i,2})\le \cF(M_{i,1})-\int_{U_{i,2}}\big((1-\varepsilon)\Delta r(x)-|H(x)|\big)dm(x),
\]
where
\[
U_{i,2}=\left\lbrace \exp_o tv\colon r_1/|v|<t<1,\ v\in\exp_o^{-1}\big(M_{i,1}\setminus\overline{B}(o,r_2)\big)\right\rbrace\subset U_i.
\]
If $r_2=R$, we are done. Otherwise we smooth out 
$\widetilde{M}_{i,2}\subset\overline{B}(o,r_2)$ to obtain an admissible smooth 
$M_{i,2}\subset\overline{B}(o,r_2)$ with
\[
\cF(M_{i,2})-\cF(\widetilde{M}_{i,2})<\frac{1}{4^2}\int_{U_i}\big((1-\varepsilon)\Delta r(x)-|H(x)|\big) dm(x).
\] 
By continuing this way we get a strictly decreasing (possible finite) sequence 
$r_j\ge R$ and admissible $\widetilde{M}_{i,j}, M_{i,j}\subset\overline{B}(o,r_j)$,
where $M_{i,j}$ is smooth, such that
\begin{align*}
\cF(M_{i,j})<\cF(M_{i,j-1})&-\int_{U_{i,j}}\big((1-\varepsilon)\Delta r(x)-|H(x)|\big)dm(x)\\
&+\frac{1}{4^j}\int_{U_i}\big((1-\varepsilon)\Delta r(x)-|H(x)|\big) dm(x).
\end{align*}
At each step $r_j$ is the infimum of $r\ge R$ such that  \eqref{Hcond2} holds for all $x\in M_{i,j-1}$, with $r(x)\ge r$.
Hence
\begin{align*}
\cF(M_{i,j})<\cF(M_i)&-\int\limits_{\bigcup_{k=1}^j U_{i,k}} \big((1-\varepsilon)\Delta r(x)-|H(x)|\big) dm(x)\\
&+\sum_{k=1}^j \left(\frac{1}{4}\right)^k \int_{U_i}\big((1-\varepsilon)\Delta r(x)-|H(x)|\big) dm(x)\\
<\cF(M_i)&-\int\limits_{\bigcup_{k=1}^j U_{i,k}} \big((1-\varepsilon)\Delta r(x)-|H(x)|\big) dm(x)\\
&+\frac{1}{3} \int_{U_i}\big((1-\varepsilon)\Delta r(x)-|H(x)|\big) dm(x).
\end{align*}
If $r_k=R$ for some $k$, we are done. Similarly, if $r_\infty:=\lim_j r_j =R$, the Hausdorff limit $M'_i$ of $M_{i,j}$'s is admissible, 
$M'_i\subset\overline{B}$, and satisfies
\begin{align}\label{mprimedecay}
\cF(M'_i)<\cF(M_i)&-\int_{\cup_j U_{i,j}}\big((1-\varepsilon)\Delta r(x)-|H(x)|\big)dm(x)\\
&+\frac{1}{3} \int_{U_i}\big((1-\varepsilon)\Delta r(x)-|H(x)|\big) dm(x).\nonumber
\end{align}
On the other hand, if $r_\infty:=\inf r_j>R$, the Hausdorff limit 
$M'_i\subset\overline{B}(o,r_\infty),\ M'_i\cap\partial B(o,r_\infty)\neq\emptyset$, and \eqref{mprimedecay} holds. 
Repeating the process once more gives a contradiction with the definition of 
$r_\infty$. Moreover, the smoothing processes can be done so that the set 
$U_i\setminus\big(\cup_j U_{i,j}\big)$ has as small volume as we wish; see \cite{schmidt}. Finally, since the sets $U_i$ converge to $U$ in measure, we obtain $M'\subset\overline{B}$ admissible for $\cF$ with boundary data $\Gamma$ such that
\[
\cF(M')\le\cF(M)-\frac{1}{4}\int_U \big(\Delta r(x)-|H(x)|\big)dm(x).
\]  
\end{proof}
\begin{remark}
\begin{enumerate}
\item In fact, the constant $1/4$ in \eqref{decay2} can be replaced by 1. Indeed, instead of the constant ratio $1/4$ in geometric series we may use a constant $q>0$ as small as we wish and perform the smoothing processes so that the lost of volume is as small as we like. Then $1/4$ in \eqref{decay2} can be replaced by any constant $c<1$, hence also by 1.
\item Consider open sets $Q\subset N$ of finite perimeter with topological boundary of the form $\partial Q=M\cup\Gamma\cup\Sigma$, where $\Sigma$ is a relatively compact $\cH^{n-1}$-rectifiable topological $(n-1)$-manifold whose relative boundary
$\Gamma=\partial\Sigma$ is $\cH^{n-2}$-rectifiable. We define the (Massari) functional $\cF$ as in $\eqref{massari}$ also in this setting. Suppose that 
$\Gamma\subset\overline{B}(o,R)$ for some $R>0$. The proof of Lemma~\ref{lemma5} then gives that any minimizer of $\cF$ with boundary data $\Gamma$ must stay inside $\overline{B}(o,R)$.
\item The existence of minimizers is proven below right before Lemma~\ref{traplemma}.
\end{enumerate}
\end{remark}

\section{Asymptotic $H$-Plateau problem}\label{app}
Throughout this section we assume that $N$ satisfies the SC condition and that
the sectional curvatures of $N$ are bounded from above by a negative constant $-\alpha^2$. Suppose, furthermore, that 
\[
|H(x)|\le H_0<(n-1)\alpha.
\]
Then there exists 
$r_0$ such that $(n-1)\alpha\tanh(\alpha r_0)>H_0$.
If $A\subset N$ is an open convex set, we denote by $A_0$ the (open)
$r_0$-neighborhood of $A$ and we write $A_0^+=N\setminus\overline{A_0}$. 

Let $U$ and $U^\prime$ be disjoint relatively open sets in $\partial_\infty N$ such that $U\cup U^\prime=\partial_\infty N\setminus L$, where $L=\bd U = \bd U^\prime$. 
Furthermore, let $O$  be the (open) tubular neighborhood of $\cone_o L$ 
with fixed positive radius $\delta$ and let $\widetilde{O}$ be the tubular neighborhood of $\cone_o U$ with the same radius $\delta$.
Note that $O\subset\widetilde{O}$.
As in \cite[Lemma 4.1]{CHR3} we take, for each $x\in\partial_\infty N\setminus L$, a relatively open set 
$W_x\subset\partial_\infty N\setminus L$ such that $x\in W_x$ and then apply the SC condition and the Hessian comparison to obtain a $C^2$ smooth open subset 
$\Omega_x$ of $N$ such that $x\in\ir\partial_\infty \Omega_x
\subset W_x$, $N\setminus\Omega_x$ is convex, and $\Omega_x\cap O=\emptyset$ for every $x\in\partial_\infty N\setminus L$.

We define
\begin{align*}
\Lambda&=\bigcap_{x\in\partial_\infty N\setminus L}(N\setminus\Omega_x),\\
\widetilde{\Lambda}&=\bigcap_{x\in U^\prime}(N\setminus\Omega_x).
\end{align*}
Then  $O\subset\Lambda, \widetilde{O}\subset\widetilde{\Lambda}$, $\Lambda$ and $\widetilde{\Lambda}$ are closed in $N$, $\Int\Lambda$ and $\Int\widetilde{\Lambda}$ are open convex subsets of $N$ such that 
$\partial_\infty\Lambda= L$ and $\partial_\infty\widetilde{\Lambda}=L\cup U$.

For a fixed $R>0$ 
we approximate $\partial B(o,R)\cap\widetilde{O}$ by a relatively 
open subset $\Sigma\subset\partial B(o,R)$ so that its boundary 
$\Gamma:=\partial\Sigma$ is an $\cH^{n-2}$-rectifiable 
subset of $\partial B(o,R)\cap O$. We consider 
$[\Sigma]$ as a rectifiable $(n-1)$-current and 
$[\Gamma]=\partial [\Sigma]$ as a rectifiable $(n-2)$-current.
Let $M_{j},\ j\in\N,$ be a minimizing sequence for the functional 
\eqref{massari} with boundary data $\Gamma$. Then there exists a subsequence, still denoted by $M_{j}$, and an $\cH^{n-1}$-rectifiable set $M$ such that $[M_{j}]\to [M]$ and 
$\partial [M] =[\Gamma]$ as currents. 
It follows that $M$ is a minimizer for \eqref{massari} with boundary data $\Gamma$, hence $M\subset\overline B(o,R)$ by 
Lemma~\ref{lemma5}. 
Next we prove that $M\subset \overline{B}(o,R)\cap\Lambda_0$, where $\Lambda_0$ is the $r_0$-neighborhood of $\Lambda$, cf. \cite[Theorem 1.1]{coskuGD}.
  
  \begin{lemma}\label{traplemma}
Let $\Gamma\subset\partial B(o,R)\cap O$ be the boundary of $\Sigma\subset\partial B(o,R)\cap\widetilde{O}$ as above. Then there exists a minimizer $M$ of \eqref{massari}
with the boundary data $\Gamma$ such that $M\subset \overline{B}(o,R)\cap\overline{\Lambda}_0$. 
\end{lemma}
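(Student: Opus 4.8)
The plan is to mimic the pushing argument of Lemma~\ref{lemma5}, but now sliding the surface towards the convex set $\Lambda$ instead of towards the center $o$. Since the minimizer $M$ produced above already satisfies $M\subset\overline B(o,R)$, it suffices to show that any portion of (a smooth approximation of) $M$ lying outside $\overline{\Lambda}_0=\overline{\Lambda_0}$ can be retracted towards $\Lambda$ with a strict decrease of the functional $\cF$; minimality then forces $M\subset\overline B(o,R)\cap\overline{\Lambda}_0$.

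The first step is a mean curvature comparison for the equidistant hypersurfaces of $\Lambda$. Write $d_\Lambda(x)=\dist(x,\Lambda)$. Because $\Int\Lambda$ is convex and the sectional curvatures satisfy $K_N\le-\alpha^2$, the Hessian comparison theorem (as used in \cite{CHR3}) gives, at points where $d_\Lambda$ is smooth,
\begin{equation}\label{globHcond}
\Delta d_\Lambda(x)\ge (n-1)\alpha\tanh\big(\alpha\, d_\Lambda(x)\big)>H_0\ge|H(x)|
\qquad\text{whenever } d_\Lambda(x)\ge r_0,
\end{equation}
the middle inequality being exactly the defining property $(n-1)\alpha\tanh(\alpha r_0)>H_0$ of $r_0$ together with the monotonicity of $\tanh$. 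Thus the level set $\{d_\Lambda=t\}$, $t\ge r_0$, has scalar mean curvature with respect to the outward normal $\nabla d_\Lambda$ strictly larger than $|H|$, which is the analogue of \eqref{Hcond} exploited in Lemma~\ref{lemma5}. The possible non-smoothness of $\partial\Lambda$ (an intersection of $C^2$ convex sets) is harmless: $d_\Lambda$ is convex on $N\setminus\Lambda$ and \eqref{globHcond} holds in the support/barrier sense, which is all that the flow argument applied to smooth approximations needs.

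With \eqref{globHcond} in hand I would repeat the scheme of Lemma~\ref{lemma5} almost verbatim, replacing $r$ by $d_\Lambda$ and $R$ by $r_0$ throughout. Approximate $M$ by smooth admissible hypersurfaces $M_i$; at a point of $M_i$ of maximal $d_\Lambda$-value the mean curvature vector is parallel to $-\nabla d_\Lambda$ and dominates $\Delta d_\Lambda$ in length, so the analogue of \eqref{Hcond2} holds on a collar $\{d_\Lambda\ge r\}$. Flowing along $X=-\rho(d_\Lambda)\nabla d_\Lambda$, with $\rho\ge0$ supported in $\{d_\Lambda>r_0\}$, projects the part of $M_i$ outside $\overline{\Lambda}_0$ onto $\{d_\Lambda=r_0\}$, and the first variation formula \eqref{eq1var} together with the co-area computation yields a strict decrease of $\cF$ controlled by $\int\big((1-\varepsilon)\Delta d_\Lambda-|H|\big)\,dm$ over the swept region, where $\varepsilon>0$ is chosen small using the strict inequality in \eqref{globHcond}. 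The smoothing, the geometric-series bookkeeping, and the passage to a Hausdorff limit are identical to Lemma~\ref{lemma5}.

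Two points require separate verification. First, the flow must not push $M_i$ out of $\overline B(o,R)$: the integral curves of $-\nabla d_\Lambda$ are the geodesics realizing the nearest-point projection onto the convex set $\Lambda$, and since $o\in\cone_o L\subset O\subset\Lambda$ this projection is $1$-Lipschitz and fixes $o$, hence maps $\overline B(o,R)$ into itself; as geodesic balls in a Cartan-Hadamard manifold are convex, the whole flow stays in $\overline B(o,R)$. Second, the boundary data are preserved because $\Gamma\subset O\subset\Lambda$ forces $d_\Lambda=0$, and hence $X=0$, on $\Gamma$. The main obstacle is precisely the comparison \eqref{globHcond}: it uses the strictly negative curvature upper bound to produce a \emph{uniform} lower bound $(n-1)\alpha\tanh(\alpha r_0)>H_0$ on the mean curvature of the equidistant hypersurfaces, and this is exactly what degenerates if the curvature is allowed to tend to $0$, as remarked in the introduction.
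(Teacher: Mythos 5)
Your overall strategy is indeed the one in the paper: slide the surface along $X=-\rho(d)\nabla d$ for the distance $d$ to a convex set, use the Hessian comparison under $K_N\le-\alpha^2$ to get $\Delta d\ge(n-1)\alpha\tanh(\alpha d)>H_0\ge|H|$ beyond distance $r_0$, and rerun the first-variation/co-area bookkeeping of Lemma~\ref{lemma5}. But there is one structural difference, and it hides a genuine gap. The paper never flows toward $\Lambda$ itself: it fixes one $x\in\partial_\infty N\setminus L$ at a time and flows toward the single convex set $N\setminus\Omega_x$, which by the very definition of the SC condition has $C^2$ boundary. There the distance function is genuinely $C^2$ outside the set, Kasue's comparison \eqref{globHcond} applies classically, the vector field is smooth, the flowed surfaces $M_i^t$ remain smooth, and the maximal-point assertion ($\vec H_{x_1}$ parallel to $-\nabla d(x_1)$ with $|\vec H_{x_1}|\ge\Delta d(x_1)$) is legitimate. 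Trapping $M$ in each $\overline{(N\setminus\Omega_x)_0}$ and intersecting over $x$ then gives the conclusion.

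Your $d_\Lambda$, by contrast, is the distance to an intersection of infinitely many convex sets, so off $\Lambda$ it is only $C^1$ with locally Lipschitz gradient. Consequently: $\Delta d_\Lambda$ has no pointwise meaning on the surfaces $M_i^t$ (null sets, so ``a.e.\ second differentiability'' of a convex function does not help); the level set through the point of maximal $d_\Lambda$ on $M_i$ is only $C^{1,1}$ and need not be twice differentiable there, so the touching argument for $|\vec H_{x_1}|\ge\Delta d_\Lambda(x_1)$ does not apply as stated; and the flow of the merely Lipschitz field $-\rho(d_\Lambda)\nabla d_\Lambda$ produces nonsmooth $M_i^t$, invalidating the computation behind \eqref{eq1var} and the co-area estimate as written. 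Your one-sentence remedy---that \eqref{globHcond} ``holds in the support/barrier sense''---is precisely what is unproved: a lower barrier for $d_\Lambda$ at a point $z$ with projection $p$ requires a \emph{smooth} convex set containing $\Lambda$ whose boundary passes through $p$ orthogonally to the geodesic from $p$ to $z$, and you exhibit none; smoothing convex sets in a general Cartan-Hadamard manifold is delicate, and you cannot reduce to the smooth pieces pointwise since $d_\Lambda\neq\sup_x d_{N\setminus\Omega_x}$ (already for two Euclidean half-planes meeting in a quadrant, the point $(a,a)$ has distance $\sqrt{2}\,a$ to the intersection but only $a$ to each half-plane). This is exactly the difficulty the $C^2$ requirement in the SC condition is designed to circumvent, and the repair is the paper's: run your argument verbatim with $d=\dist(\cdot,N\setminus\Omega_x)$ for each fixed $x$ and intersect. (Two minor remarks: the same wedge example shows $\bigcap_x\overline{(N\setminus\Omega_x)_0}$ can strictly contain $\overline{\Lambda}_0$, so your route, if completed, would give a literally sharper containment than the paper's intersection argument---both suffice for Theorem~\ref{main1}; and your care about the flow staying in $\overline{B}(o,R)$ is unnecessary, since minimality yields $M\subset\overline{B}$ by Lemma~\ref{lemma5} independently of the trapping deformation, which need not preserve the ball.)
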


\begin{proof}
The existence of a minimizer $M$ was obtained above.
By Lemma \ref{lemma5}, it suffices to prove that $M\subset\Lambda_0$. Hence it is enough to show that, for all 
$x\in \partial_\infty N\setminus L$, $M\subset \overline{(N\setminus\Omega_x)}_0$, where $(N\setminus\Omega_x)_0$ is the 
$r_0$-neighborhood of the ($C^2$-smooth) convex set $N\setminus \Omega_x$.  
Let $d=\dist(\cdot,N\setminus\Omega_x)$ be the Riemannian distance to the convex set $N\setminus\Omega_x$. 
Then $d$ is $C^2$-smooth in $N\setminus\Omega_x$ and the Hessian comparison theorem \cite{Kasue} implies that
\begin{equation}\label{globHcond}
\Delta d(z)\ge (n-1)\alpha\tanh\big(\alpha d(z)\big)
\ge (n-1)\alpha\tanh\big(\alpha r_0\big)>H_0\ge |H(z)|
\end{equation}
for all $z\in (N\setminus\Omega_x)_0^+=N\setminus\overline{(N\setminus\Omega_x)_0}$. 
Assume on the contrary that $M\cap (N\setminus\Omega_x)_0^+\ne\emptyset$. 
As in the proof Lemma~\ref{lemma5} there is a sequence 
of smooth $(n-1)$-manifolds $M_i$ admissible for the functional 
$\cF$ with boundary data $\Gamma$ such that $M_i\cap (N\setminus\Omega_x)_0^+ \ne\emptyset$ and $\cF(M_i)\to\cF(M)$. Again, for each fixed $M_i$, if $x_1\in M_i$ is at maximal distance from $N\setminus\Omega_x$, the mean curvature vector $\vec{H}_{x_1}$ of $M_i$ is parallel to $-\nabla d(x_1)$ and 
$|\vec{H}_{x_1}|\ge\Delta d(x_1)>H(x_1)$.
Hence
$\langle\vec{H}_x,\nabla d(x)\rangle <0$ and 
$|\vec{H}_x|> |H(x)|$
 for all $x\in M_i$ with $d(x)\ge d_1=d(x_1)-\varepsilon$. 
Next we define a smooth vector field on $N$ by setting
$X_x=-\rho\big(d(x)\big)\nabla d(x)$, where  
$\rho\colon [0,\infty)\to [0,\infty)$ is a smooth non-decreasing function such that $\rho(t)>0$ if and only if $t>d_1$. 
Then 
$\langle X_x,H(x)\nu_x-\vec{H}_x\rangle <0$ for $d(x)>d_1$ and vanishes elsewhere.
Applying the first variation formula to $M_i$ with the vector field 
$X$ we obtain
\[
\delta\cF(M_i)[X]=
\int_{M_i} \big\langle X_x,H(x)\nu_x-\vec{H}_x\big\rangle d\cH^{n-1}(x)<0.
\]
In particular, deforming $M_i$ in the direction of $X$ decreases 
the value of the functional and repeating a similar argument as in the proof of Lemma~\ref{lemma5} we may conclude that the minimizer $M$ must stay inside $(N\setminus\Omega_x)_0$.
\end{proof}
Now we are ready to prove our first main theorem. For convenience we repeat it from Introduction.
\begin{theorem}\label{main1b}
Let $N^n$ be a Cartan-Hadamard manifold satisfying the SC condition. Suppose also that sectional curvatures of $N$ have a negative upper bound
\[
K_N\le-\alpha^2<0,\quad\alpha>0.
\]
Let $L\subset\partial_\infty N$ be a relatively closed subset such that 
$L=\bd U=\bd U'$ for some disjoint relatively open subsets $U,U'\subset\partial_\infty N$ with $U\cup U'=\partial_\infty N\setminus L$. Suppose that $H\colon N\to[-H_0,H_0]$ is a continuous function where $0\le H_0<(n-1)\alpha$. Then there exists an open set $Q\subset N$ of locally finite perimeter whose boundary $M$ has (generalized) mean curvature $H$ towards $N\setminus Q,\ \partial_\infty Q\subset U\cup L$, and $\partial_\infty M=L$. 
 \end{theorem}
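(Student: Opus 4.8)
The plan is to solve the problem on the exhaustion of $N$ by the balls $B(o,R)$, to trap the resulting minimizers uniformly in $R$ between the convex sets $\Lambda$ and $\widetilde\Lambda$, and then to let $R\to\infty$. For each $R>0$ we already have, from the construction preceding Lemma~\ref{traplemma}, a minimizer $M=M_R$ of \eqref{massari} with boundary data $\Gamma_R=\partial\Sigma_R$ and its associated open set $Q_R$ with $\partial Q_R=M_R\cup\Gamma_R\cup\Sigma_R$. By Lemma~\ref{traplemma}, $M_R\subset\overline B(o,R)\cap\overline\Lambda_0$. First I would upgrade this to a trapping of the whole region: since $M_R\subset\overline\Lambda_0\subset\overline{\widetilde\Lambda}_0$ and $\Gamma_R,\Sigma_R\subset\widetilde O\subset\widetilde\Lambda_0$, the entire boundary $\partial Q_R$ lies in the closed convex set $\overline{\widetilde\Lambda}_0$. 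Every component of $N\setminus\overline{\widetilde\Lambda}_0$ is unbounded --- from any point outside a closed convex set in a Hadamard manifold the outward geodesic ray stays in the complement --- while $Q_R$ is bounded; hence no component meets $Q_R$, and $Q_R\subset\overline{\widetilde\Lambda}_0$.

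Next I would pass to the limit $R\to\infty$ along a sequence. Exhausting $N$ by compact sets and comparing each $Q_R$ with a fixed competitor, the relative perimeters of $Q_R$ are bounded on every compact set; the compactness theorem for sets of locally finite perimeter then yields, after a diagonal subsequence, an open set $Q$ of locally finite perimeter with $Q_R\to Q$ in $L^1_{\loc}$, $[M_R]\to[M]$ and $M=\partial^*Q$. Local minimality passes to the limit: a competitor differing from $Q$ inside a fixed compact set $C$ becomes, once $R$ is large enough that $C\subset B(o,R)$ avoids $\Gamma_R\cup\Sigma_R$, an admissible competitor for $Q_R$, so lower semicontinuity of the functional gives that $Q$ locally minimizes \eqref{glob-funct}. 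By the first variation formula \eqref{eq1var} of Section~\ref{gmc}, the reduced boundary $M=\partial^*Q$ then has generalized mean curvature $H$ pointing towards $N\setminus Q$.

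It remains to read off the asymptotic boundaries. Passing the two traps to the limit, $M\subset\overline\Lambda_0$ gives $\partial_\infty M\subset\partial_\infty\overline\Lambda_0=L$ and $Q\subset\overline{\widetilde\Lambda}_0$ gives $\partial_\infty Q\subset\partial_\infty\overline{\widetilde\Lambda}_0=U\cup L$ (a bounded neighbourhood of a convex set has the same asymptotic boundary). The inclusion $Q\subset\overline{\widetilde\Lambda}_0$ also forces $N\setminus\overline{\widetilde\Lambda}_0\subset N\setminus\overline Q$, whence $U'\subset\partial_\infty(N\setminus Q)$; and since the caps $\Sigma_R$ lie over the $U$-cone and $Q_R$ fills the region between $\Sigma_R$ and $M_R$, every geodesic ray from $o$ to a point of $U$ lies eventually in $Q_R$, hence in $Q$, so $U\subset\partial_\infty Q$. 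Finally I would obtain $L\subset\partial_\infty M$ from $L=\bd U=\bd U'$: for $\xi\in L$ choose $u_k\in U$ and $u_k'\in U'$ with $u_k,u_k'\to\xi$ in the cone topology; the geodesic rays to $u_k$ and $u_k'$ accumulate in $\overline Q$ and in $\overline{N\setminus Q}$ respectively, so on all sufficiently large geodesic spheres both $Q$ and $N\setminus Q$ appear arbitrarily close to the direction $\xi$, forcing $M=\partial Q$ to accumulate at $\xi$. Hence $\xi\in\partial_\infty M$, and with $\partial_\infty M\subset L$ we conclude $\partial_\infty M=L$.

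The step I expect to be the main obstacle is this last identification $\partial_\infty M=L$, that is, controlling the limit at infinity and ruling out that $M$ collapses or fails to reach part of $L$. It is precisely here that both trapping families must be used together --- $\Lambda$ to hold $M$ on the $L$-side and $\widetilde\Lambda$ to keep $Q$ off the $U'$-side --- in combination with the separation $L=\bd U=\bd U'$; the curvature bound $K_N\le-\alpha^2$ enters through \eqref{globHcond} to make these traps available, and the SC condition is what produces the convex barriers $\Omega_x$ and hence the sets $\Lambda,\widetilde\Lambda$ in the first place.
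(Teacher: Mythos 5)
Your proposal is correct in outline and follows the paper's global scheme --- exhaustion by balls $B(o,R_i)$, caps $\Sigma_i$ with $\Gamma_i\subset O$, the trapping Lemma~\ref{traplemma}, locally uniform perimeter bounds, compactness, passage of minimality/stationarity to the limit, and $\partial_\infty M\subset L$ read off from $M\subset\overline\Lambda_0$ --- but your treatment of the decisive inclusion $L\subset\partial_\infty M$ is genuinely different from the paper's. The paper argues \emph{before} passing to the limit: for $x\in L$ and a cone neighborhood $V$ it picks a geodesic $\gamma$ with $\gamma(-\infty)\in U\cap V$ and $\gamma(\infty)\in U'\cap V$ and a closed tube $K\subset V$ around $\gamma(\R)$; since $K$ joins $\partial B(o,R_i)\cap\cone_o U$ to $\partial B(o,R_i)\cap\cone_o U'$, the current $[\Gamma_i]$ cannot bound a rectifiable $(n-1)$-current supported in $\overline B(o,R_i)\setminus K$, so every $M_i$ meets $V$, and hence so does the limit $M$. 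You instead work entirely on the limit object: you first establish $U\subset\partial_\infty Q$ (radial rays to $U$ eventually lie in $Q_R$, using convexity of $d(\cdot,\Lambda)$ along radial geodesics so that the exit from $\overline\Lambda_0$ is permanent) and $U'\subset\partial_\infty(N\setminus Q)$ via the second trap $Q\subset\overline{\widetilde\Lambda}_0$ --- which you derive from unboundedness of the complementary components of the convex neighborhood, a step the paper leaves implicit and which also furnishes $\partial_\infty Q\subset U\cup L$ --- and then use connectedness of the spherical caps $T\cap\partial B(o,\rho)$ of truncated cones $T$ around $\xi\in L$ to force $\partial Q$ to meet every such cap at all large radii. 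Both mechanisms are sound. The paper's linking argument is more economical: it needs no information on which side of $M_i$ the set $Q_i$ lies and never has to prove $U\subset\partial_\infty Q$. Your route costs more (the ray and tube arguments) but yields strictly more information, namely the asymptotic location of $Q$ itself, and it makes explicit the role of $\widetilde\Lambda$, which the paper introduces but never visibly uses in this proof.

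Two points should be tightened, though neither is a genuine gap at the paper's own level of detail. First, your cap argument produces points of the \emph{topological} boundary $\partial Q$, whereas you defined $M=\partial^*Q$; choose the density-one representative of $Q$ and invoke the density estimates for minimizers of the Massari functional, so that $\partial Q=\overline{\partial^*Q}=\spt\|\partial[Q]\|$ and the two readings of $\partial_\infty M$ coincide. Second, two limiting steps are stated too quickly for $L^1_{\loc}$ convergence: the claim that a competitor for $Q$ in a compact set $C$ "becomes" a competitor for $Q_R$ requires the standard De Giorgi cut-and-paste on an annulus around $C$, at a radius where the slice of $Q_R\Delta Q$ has small $\cH^{n-1}$-measure; and "the tail of the ray lies in $Q_R$, hence in $Q$" needs an open tube of such rays (available, again by convexity of $d(\cdot,\Lambda)$ and because the ray for fixed $u\in U$ hits the interior of $\Sigma_R$ at distance greater than $\delta$ from $\cone_o L$ for large $R$), since a single ray is a null set. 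With these routine repairs your argument is complete.
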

\begin{proof}
The proof is an adaptation of a method of Lang \cite{Lang1}; see also \cite{BL} and \cite{CHR3}.
We apply Lemma~\ref{traplemma} in geodesic balls $B(o,R_i)$, where 
$R_i\to\infty$ is an increasing sequence. 
For each $i$ 
we approximate $\partial B(o,R_i)\cap\widetilde{O}$ by a relatively 
open subset $\Sigma_i\subset\partial B(o,R_i)$ so that $\Gamma_i:=\partial\Sigma_i\subset
\partial B(o,R_i)\cap O$ is $\cH^{n-2}$-rectifiable. Furthermore, let 
$M_i\subset\overline{B}(o,R_i)\cap \Lambda_0$ be a minimizer of \eqref{massari} with the boundary data $\Gamma_i$ given by Lemma~\ref{traplemma}. Then $\overline{M}_i\cup\Sigma_i$ bounds an open set $Q_i$ of finite perimeter. Furthermore, 
total variations are locally uniformly bounded as
\[
\sup_i ||\partial[Q_i]||\big(B(o,r)\big)\le \cH^{n-1}\big(\partial B(o,r)\big)+ H_0\Vol\big(B(o,r)\big), 
\]
the upper bound depending only on $r,\ n$, and the sectional curvature lower bound on $B(o,r)$. 
Hence there exists a subsequence $Q_{i_j}$ and a set $Q\subset N$ of locally finite perimeter with boundary $M=\partial Q$ such that $[Q_{i_j}]\to [Q]$ and $[M_{i_j}]\to [M]$ as currents and that $M$ is a minimizer for $\cF$ in the sense that $\delta\cF(M)[X]=0$
for all compactly supported smooth vector fields. Hence $M$ has (generalized) mean curvature $H$ pointing outwards $Q$. Furthermore, since $M\subset\Lambda_0$ and $\partial_\infty\Lambda_0= L$, we have $\partial_\infty M\subset L$.
To prove that $L\subset\partial_\infty M$, take an arbitrary point $x\in L$ and let $V$ be a (cone) neighborhood of $x$. Then there exists a geodesic $\gamma\colon\R\to N$, a closed tubular neighborhood $K$ of $\gamma(\R)$, with $K\subset V$, and points at infinity 
$\gamma(-\infty)\in U\cap V,\ \gamma(\infty)\in U'\cap V$. Hence for all sufficiently large $i$, $K$ intersects both $\partial B(o,R_i)\cap\cone_o U$ and $\partial B(o,R_i)\cap\cone_o U'$, and consequently $[\Gamma_i]$ can not be a boundary of a rectifiable $(n-1)$-current $T$ with $\spt T\subset \overline{B}(o,R_i)\setminus K$. In particular, $M_i$ intersects with $V$ for all sufficiently large $i$, and since $V$ is an arbitrary neighborhood of $x$, we conclude that 
$x\in\partial_\infty M$. Hence $L\subset\partial_\infty M$.
\end{proof}

\begin{remark}
\label{rmkregu}
Notice that, by regularity theory, a minimizer $M$ of the functional \eqref{massari} is a
$C^2$-smooth $(n-1)$-dimensional manifold up to a closed singular set of Hausdorff dimension at most $n-8$; see e.g. \cite[Teorema 5.1, 5.2]{massari}. In particular, in dimensions $n\leq 7$, a minimizer M is $C^2$-smooth.
\end{remark}

The next result is a generalization of \cite[Theorem 3]{Al-Ro}.
\begin{theorem}\label{main2b}
Let $N^n$ be a Cartan-Hadamard manifold satisfying the SC condition. Suppose also that sectional curvatures of $N$ have a negative upper bound
\[
K_N\le-\alpha^2<0,\quad\alpha>0.
\]
Let $L\subset\partial_\infty N$ be a relatively closed subset such that 
$L=\bd U=\bd U'$ for some disjoint relatively open subsets $U,U'\subset\partial_\infty N$ with $U\cup U'=\partial_\infty N\setminus L$.
If $H\in ]0,H_0]$ is a constant, there exist two disjoint open sets $Q_j\subset N, j=1,2$, of locally finite perimeter whose boundaries $M_j$ have constant (generalized) mean curvature $H$ and $\partial_\infty M_j=L$. 
\end{theorem}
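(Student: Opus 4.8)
The plan is to run the construction behind Theorem~\ref{main1b} twice, once for $U$ and once for $U'$, and then to prove that the two resulting regions are disjoint. First I would apply Lemma~\ref{traplemma} and the proof of Theorem~\ref{main1b} with the caps $\Sigma_i$ approximating $\partial B(o,R_i)\cap\widetilde O$ on the $U$-side, obtaining an open set $Q_1$ of locally finite perimeter whose boundary $M_1$ has constant mean curvature $H$ pointing outwards $Q_1$, with $M_1\subset\overline{\Lambda}_0$, $\partial_\infty Q_1\subset U\cup L$, and $\partial_\infty M_1=L$. Since the hypotheses on $(U,U',L)$ are symmetric in $U$ and $U'$ (both satisfy $L=\bd U=\bd U'$) and the trapping set $\Lambda$ depends only on $L$, repeating the construction with the caps taken on the $U'$-side produces a second set $Q_2$ with boundary $M_2\subset\overline{\Lambda}_0$ of constant mean curvature $H$ pointing outwards $Q_2$, with $\partial_\infty Q_2\subset U'\cup L$ and $\partial_\infty M_2=L$. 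Both $M_1$ and $M_2$ thus solve the asymptotic $H$-Plateau problem for $L$, and everything except disjointness is already in hand.

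The geometric heart of the matter is the relative position of the two regions. Because $Q_1$ is the component reaching $U$ at infinity and $Q_2$ the one reaching $U'$, while both bounding hypersurfaces are confined to $\overline{\Lambda}_0$, the outward unit normals $\nu_1,\nu_2$, and hence the mean curvature vectors $\vec H_j=H\nu_j$, both point into the central region $N\setminus(Q_1\cup Q_2)$, which contains the convex hull of $L$; that is, $\vec H_1$ and $\vec H_2$ point toward each other across the intervening slab. This is exactly where the strict inequality $H>0$ enters: for $H=0$ the two constructions collapse to the same minimal hypersurface and no multiplicity survives, whereas for $H>0$ the positive potential term $\int_Q H\,dm$ pushes each minimizer strictly off the convex hull into its own side.

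To convert this into disjointness I would argue as follows. For $n\le 7$ the minimizers are $C^2$-smooth everywhere by Remark~\ref{rmkregu}, and I would invoke the strong maximum principle: if $M_1$ and $M_2$ touched at a point $p$, then, $Q_1$ and $Q_2$ lying on opposite sides of the common tangent hyperplane, I would write both surfaces as graphs over that hyperplane and set $w$ equal to their difference, so that $w\ge 0$ with an interior zero of vanishing gradient at $p$; the comparison for the (uniformly elliptic) mean curvature operator would then give $Lw=\pm 2H\ne 0$ of the wrong sign at this interior extremum, a contradiction. Hence $M_1\cap M_2=\emptyset$, and in particular $Q_1\cap Q_2=\emptyset$. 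For general $n$ I would instead establish disjointness of the open sets directly from the minimality of $Q_1$ and $Q_2$ for $\cF$: were $Q_1\cap Q_2$ of positive measure, the two reduced boundaries would have to cross, and exchanging the pieces of $M_1$ and $M_2$ along the crossing yields competitors with the same potential energy (by additivity of $\int_Q H\,dm$) and no greater area (by the submodularity of perimeter), but with a corner that can be rounded off to strictly decrease $\cF$, contradicting minimality. This forces $Q_1\cap Q_2=\emptyset$.

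The hard part, and the step I expect to absorb most of the work, is precisely this disjointness/non-crossing claim. In the smooth range $n\le 7$ the strong maximum principle is clean, but for $n>7$ the hypersurfaces carry a singular set of dimension up to $n-8$, the tangential maximum principle is no longer directly available, and one must rule out both transversal crossings and overlaps entirely at the level of sets of finite perimeter via the cut-and-paste comparison above. Throughout, the strict positivity $H>0$ is indispensable, since the two solutions coincide when $H=0$.
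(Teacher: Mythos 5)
Your existence construction is exactly the paper's: run the proof of Theorem~\ref{main1b} twice, with the caps $\Sigma_i$ approximating $\partial B(o,R_i)\cap\widetilde O$ and then with $\Sigma_i'=\partial B(o,R_i)\setminus(\Sigma_i\cup\Gamma_i)$, obtaining $Q_1,Q_2$ with $\partial_\infty Q_1\subset U\cup L$, $\partial_\infty Q_2\subset U'\cup L$ and $\partial_\infty M_j=L$. The genuine gap is in the disjointness step, and both of your arguments for it fail as stated. For $n\le 7$ you argue $M_1\cap M_2=\emptyset$ by a tangential maximum principle and then assert ``in particular $Q_1\cap Q_2=\emptyset$''; this implication is false in general (disjoint boundaries are compatible with one open set containing, or overlapping, the other), and the maximum-principle step itself is circular: to write both hypersurfaces as ordered graphs over a common tangent hyperplane you must already know that $Q_1$ and $Q_2$ lie on opposite sides, i.e.\ essentially that they are disjoint --- a priori a common point could be a transversal crossing with no common tangent hyperplane at all. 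Indeed the paper proves the theorem in all dimensions and uses the tangency argument only \emph{afterwards}, as a separate theorem deducing $M_1\cap M_2=\emptyset$ for $n\le 7$ \emph{from} the already established fact $Q_1\cap Q_2=\emptyset$. The paper's actual route to disjointness is a barrier argument you do not have: apply Theorem~\ref{main1b} with $H\equiv 0$ to get an open set $W$ whose boundary $S=\partial[W]$ is a mass-minimizing current with $\partial_\infty\spt S=L$, and show $Q_1\subset W$ and $Q_2\subset N\setminus\overline W$, because if, say, $Q_1\setminus \overline W$ had positive volume one could replace $M_1\cap(N\setminus\overline W)$ by $\spt S\cap Q_1$, not increasing area (minimality of $S$) while strictly decreasing $\int_Q H\,dm$ (here $H>0$ enters), contradicting minimality of $M_1$.

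Your general-$n$ cut-and-paste is the right kind of idea but is executed incorrectly. The union/intersection exchange you describe produces inadmissible competitors: at the level of the approximating minimizers in $\overline B(o,R_i)$, $Q_{1,i}\cup Q_{2,i}$ has spherical trace $\Sigma_i\cup\Sigma_i'$ and $Q_{1,i}\cap Q_{2,i}$ has none, so neither competes in either boundary-value problem; moreover your claim of ``same potential energy'' plus an unspecified corner-rounding is exactly the step that cannot be justified near the singular set when $n\ge 8$, as you yourself concede. The correct exchange is by set differences: $Q_{1,i}\mapsto Q_{1,i}\setminus Q_{2,i}$ and $Q_{2,i}\mapsto Q_{2,i}\setminus Q_{1,i}$ \emph{are} admissible for their respective data $\Gamma_i$, the perimeters satisfy $P(A\setminus B)+P(B\setminus A)\le P(A)+P(B)$, and the potential terms drop by exactly $2H\,\Vol(Q_{1,i}\cap Q_{2,i})$, which is strictly positive if the overlap has positive volume --- contradicting minimality with no rounding needed, and making transparent why $H>0$ is indispensable. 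This must be done on the approximators rather than on the limits (the limit sets are only stationary/locally minimizing and their overlap need not be compactly supported), after which $\Vol(Q_1\cap Q_2)=0$ passes to the limit and openness gives $Q_1\cap Q_2=\emptyset$. So repaired, your comparison argument would give a legitimate alternative to the paper's $H=0$ barrier; as written, the proof does not close.
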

\begin{proof}
Let $Q_1\subset N$, with $\partial_\infty Q_1\subset U\cup L$, be an open set of locally finite perimeter obtained in (the proof of) Theorem~\ref{main1}. Thus its boundary $M_1$ has constant (generalized) mean curvature towards $N\setminus Q_1$. Repeating the construction 
 with  $\Sigma_i$ replaced by $\Sigma_i^\prime=\partial B(o,R_i)\setminus (\Sigma_i\cup\Gamma_i)$ gives another open set $Q_2$ of locally finite perimeter whose boundary $M_2$ has constant mean curvature $H$ towards $N\setminus Q_2$. Furthermore, $\partial_\infty Q_2\subset U^\prime\cup L$ and $\partial_\infty M_j=L$.
 In the special case $H\equiv 0$, Theorem~\ref{main1} yields the existence of an open set $W\subset N$ of locally finite perimeter such that 
$S=\partial[W]$ is a (mass) minimizing locally rectifiable 
$(n-1)$-current (with integer multiplicity) and 
$\partial_\infty\spt S=L$; see also \cite[Theorem 1.6]{CHR3}. Then $Q_1\subset W$ and $Q_2\subset N\setminus\overline{W}$ since otherwise it would be possible to strictly decrease the value of the functional $\cF$ by replacing  
$M_1\cap(N\setminus\overline{W})$ with $\spt S\cap Q_1$, and similarly for $Q_2, M_2$. Hence $Q_1\cap Q_2=\emptyset$.
\end{proof}
\begin{remark}
Although $Q_1\cap Q_2=\emptyset$, the above reasoning does not imply that the boundaries $M_1$ and $M_2$ are (pointwise) disjoint. We believe that $M_1\cap M_2=\emptyset$ but, unfortunately, we are not able to prove it in full generality. However, using the regularity results from geometric measure theory it is possible to prove that the intersection $M_1\cap M_2$ is rather small and, in particular, an empty set in dimensions $n\le 7$;
see Remark \ref{rmkregu}.
\end{remark}
\begin{theorem}
Let $Q_i$ and $M_i,\ i=1,2$, be given by Theorem~\ref{main2}. Denote by $S_i$ the closed singular sets of Hausdorff dimension at most $n-8$ such that $M_i\setminus S_i,\ i=1,2$, are $C^2$-smooth submanifolds. Then $(M_1\setminus S_1)\cap (M_2\setminus S_2)=\emptyset$. In particular, in dimensions $n\le 7$, the sets $S_i$ are empty, and consequently $M_1$ and $M_2$ are disjoint.
\end{theorem}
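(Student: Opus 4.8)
The plan is to argue by contradiction using the geometric strong maximum principle (touching principle) for hypersurfaces of constant mean curvature, exploiting the fact, already established in Theorem~\ref{main2b}, that $Q_1\cap Q_2=\emptyset$. Suppose there were a point $p\in(M_1\setminus S_1)\cap(M_2\setminus S_2)$. Since $p$ lies in $N$ and avoids both singular sets, each $M_i$ is a $C^2$ embedded hypersurface in a neighborhood of $p$, and near $p$ the open set $Q_i$ coincides with exactly one of the two local sides of $M_i$ (because $M_i=\partial Q_i$). The first step is to show that $M_1$ and $M_2$ are tangent at $p$: if their tangent hyperplanes differed, then to leading order $Q_1$ and $Q_2$ would be two transversal half-spaces, whose intersection is a nonempty open wedge, contradicting $Q_1\cap Q_2=\emptyset$.

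Next I would introduce geodesic normal coordinates centered at $p$ with the common tangent hyperplane as $\{x_n=0\}$, and write $M_1$, $M_2$ locally as graphs $x_n=u_1(x')$ and $x_n=u_2(x')$ with $u_i(0)=0$ and $\nabla u_i(0)=0$. Disjointness of $Q_1$ and $Q_2$ forces them to lie on opposite sides of the common tangent hyperplane; after orienting the $x_n$-axis so that $Q_1=\{x_n<u_1\}$ locally, one gets $Q_2=\{x_n>u_2\}$ and hence $u_1\le u_2$ near $0$, with equality at $0$. The crucial orientation observation is that the mean curvature vector of each $M_i$ points out of $Q_i$: $\vec{H}_1$ points upward (into the gap $\{u_1\le x_n\le u_2\}$) and $\vec{H}_2$ points downward (into the same gap), so the two vectors point towards one another. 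Writing $\mathcal M[u](0)=\Delta u(0)=\operatorname{tr}D^2u(0)$ for the scalar mean curvature at $p$ with respect to the upward normal (the Christoffel symbols vanish at $p$ in normal coordinates, so this coincides with the Euclidean expression), the orientations give $\mathcal M[u_1](0)=+h$ and $\mathcal M[u_2](0)=-h$, where $h>0$ is the constant, common value of the mean curvature.

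Then the contradiction is immediate and in fact purely pointwise: the function $w=u_2-u_1$ is nonnegative with an interior minimum $w(0)=0$, so $D^2w(0)\succeq0$ and therefore $\operatorname{tr}D^2u_2(0)\ge\operatorname{tr}D^2u_1(0)$, i.e. $\mathcal M[u_2](0)\ge\mathcal M[u_1](0)$. This reads $-h\ge h$, impossible since $h>0$. Equivalently, one may invoke the strong maximum principle: $w$ is a nonnegative supersolution of the linearized, zeroth-order-free, uniformly elliptic mean curvature operator with $Lw=-2h<0$, which cannot attain an interior minimum. Hence $(M_1\setminus S_1)\cap(M_2\setminus S_2)=\emptyset$.

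Finally, for the last assertion, when $n\le7$ the singular sets $S_i$ have Hausdorff dimension at most $n-8<0$ and are therefore empty (Remark~\ref{rmkregu}); thus $M_i=M_i\setminus S_i$ and the conclusion above gives $M_1\cap M_2=\emptyset$. The main obstacle is the bookkeeping of orientations in the second paragraph, namely verifying that \emph{mean curvature $H$ pointing out of $Q_i$} together with $Q_1\cap Q_2=\emptyset$ forces the two mean curvature vectors to point towards each other, which is exactly the configuration in which the maximum principle yields a contradiction rather than being vacuous; everything else is the standard touching-principle computation.
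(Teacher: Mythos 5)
Your proof is correct and takes essentially the same route as the paper: both argue by contradiction at a common regular point, use the disjointness $Q_1\cap Q_2=\emptyset$ established in Theorem~\ref{main2} to force a tangential touching with $Q_1$ and $Q_2$ on opposite sides, and derive the contradiction from the fact that the two mean curvature vectors (each of magnitude $H>0$, pointing out of the respective $Q_i$) are oppositely oriented at the touching point, your graph-coordinate Hessian computation being simply a fleshed-out version of the paper's terse statement that $M_1$ has mean curvature $-H$ and $M_2$ mean curvature $H$ towards $N\setminus Q_2$. The concluding step for $n\le 7$ via the emptiness of $S_i$ (Remark~\ref{rmkregu}) also matches the paper.
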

\begin{proof}
Suppose on the contrary that $(M_1\setminus S_1)\cap (M_2\setminus S_2)\neq\emptyset$ and let 
$x\in (M_1\setminus S_1)\cap (M_2\setminus S_2)$. Then $M_i\setminus S_1$ has constant 
mean curvature $H$ at $x$ towards $N\setminus Q_i$. Now both $M_1$ and $M_2$ are subsets of $N\setminus Q_2$ intersecting tangentially at $x$. Since $M_1$ has mean curvature $-H$ at $x$ towards $N\setminus Q_2$ and $M_2$ has mean curvature $H$ at $x$ towards $N\setminus Q_2$, we get a contradiction since $H>0$.
\end{proof}


\end{document}